\newcommand{\cal}{\mathcal}
\newcommand{\R}{\mathbb{R}}
\newcommand{\C}{\mathbb{C}}
\newcommand{\Z}{\mathbb{Z}}
\newcommand{\dd}{\,d}
\newcommand{\Tr}{\operatorname{Tr}}
\newcommand{\N}{{\rm N}}
\newcommand{\re}{\operatorname{re}}
\newtheorem{theorem}{Theorem}[section]
\newtheorem{prop}[theorem]{Proposition}
\newtheorem{lemma}[theorem]{Lemma}
\newtheorem*{theorem*}{Theorem}
\newtheorem*{conj*}{Conjecture}
\newtheorem*{lemma*}{Lemma}
\newtheorem*{prop*}{Proposition}
\newtheorem*{cor*}{Corollary}
\numberwithin{equation}{section}
\begin{document}

\title{Selberg Integral over Local Fields}

\date{\today}

\author[Zenan Fu]{Zenan Fu}

\address{School of Mathematical Science, Zhejiang University, Hangzhou 310027, Zhejiang, P.R. China}

\email{zenanfu@zju.edu.cn }

\author[Yongchang Zhu]{Yongchang Zhu}

\address{Department of Mathematics, The Hong Kong University of Science and Technology, Clear Water Bay, Kowloon, Hong Kong}

\email{mazhu@ust.hk}

	\maketitle
	\section{\large Introduction }\label{section1}
	
 Selberg introduced his beautiful integral formula in 1944 \cite{Sel} that asserts
\begin{eqnarray}\label{selberg}
   S_{n}(a ,b ,c )&=& \int _{0}^{1}\cdots \int _{0}^{1}\prod _{i=1}^{n}t_{i}^{a -1}(1-t_{i})^{b -1}
   \prod _{1\leq i<j\leq n}|t_{i}-t_{j}|^{2c }\,dt_{1}\cdots dt_{n} \nonumber \\
    &=& \prod _{j=0}^{n-1}{\frac {\Gamma (a +jc )\Gamma (b +jc )\Gamma (1+(j+1)c )}
    {\Gamma (a +b +(n+j-1)c )\Gamma (1+c )}}
  \end{eqnarray}
  where $n$ is a positive integer and $a, b , c $ are complex numbers satisfying
   $ {\rm re} \, a > 0 $,  $ {\rm re} \, b > 0 $ and
   $ {\rm re} \, c > -{\rm min} \, \{ 1/ n, {\rm re} \, a / ( n-1), {\rm re} \, b / ( n-1) \}$.
We refer to Forrester and Warnaar's exposition \cite{FW} for the history, generalizations and the applications of Selberg integral.
Evans \cite{E1} conjectured a finite field analog of Selberg integral formula in 1980. And  Anderson \cite{An}
 proved a major case of it in 1981 and his ideas  was used to obtained the complete result \cite{E2}.
  On the other hand, Aomoto \cite{Ao} proved an analog of Selberg integral for complex field ${\mathbb{C}}$ in 1987.
 The purpose of the present paper is to formulate and prove Selberg integral formula for local fields of characteristic zero.
 To state our results, we first introduce some notations.

    Let $F$ be a local field of characteristic zero,  $\psi $ be a non-trivial additive character, $ dx $ be the self-dual Haar measure with respect to  $\psi $, i.e.,
	the Fourier transform defined  using $dx$,
	\begin{equation}\label{1.2}  {\cal F} f ( x ) = \int_F f ( y ) \psi ( x y ) d y  \end{equation}
	is an isometry. The absolute value $|a |_F$ of $a \in F$  is defined by the formula ${\rm vol} ( a U ) = | a|_F \, {\rm vol} (  U ) $.
	For a quasi-character $c $ of $F^*$, its real part, denoted by ${\rm  re } \, c $, is the unique real number satisfying the
    condition
	\begin{equation}   | c ( x ) |  = | x |_F^{{\rm re}\, c} .\end{equation}
	The $\rho $-factor $\rho ( c ) $ of  $c$ is defined as
	\begin{equation}\label{1.3}
	\rho ( c ) := \frac { \int f ( x ) c (x ) | x|_F^{-1} dx } { \int \mathcal{F} f ( x )  c^{-1} ( x ) d x } . \end{equation}
	where $f$ is a Schwartz function on $F$, the both integrals converge when $0< {\rm re } \, c <1 $ and
  they are understood as analytic continuation for general $c$,  see \cite{T}. We also use the gamma function $\Gamma (c)$
    as in \cite{GGPS}, it is related to $\rho ( c)$ by $ \Gamma ( c ) = c ( -1) \rho (c ) $.
	We denote  $\chi_0 (x ) =  | x|_F$.

 The choice of $\psi $ induces an additive character $\psi_E$ on any finite extension $E$ by $\psi_ E ( x ) = \psi ( {\rm Tr} \, x ) $
 and therefore the self-dual Haar measure $d_E $ on $E$.  Then for any quasi-character $c$ of $E^*$, we can similarly define the $\rho_E ( c ) $ and $\Gamma_E ( c )$.
  The absolute value of $E$ is denoted by $ |x|_E$, and we have $|x|_E=|\N(x)|_F$.
 Note that for a quasi-character $c$ of $F^*$, its composition $ \N \circ c $ with the norm map has the property
  $ {\rm re} (  \N \circ c ) = {\rm re} \, c $.

  Let $M_n$ be the space $\{f\in F[x] \ |\ f\  \text{is monic of degree}\  n\}$ for $n\in\Z_{>0}$. Equal $M_n$ with $F^{n}$ via the map
  \begin{equation}
  \eta : M_n \rightarrow F^{n}, f(x)= x^{n}+b_{n-1}x^{n-1}+\cdots+b_0 \mapsto (b_{n-1},\cdots,b_0).
  \end{equation}
   Let $\dd f$ be the measure inherits from the product Haar measure of $F^{n}$.
	 Recall the  discriminant $\Delta (f ) $ of $f \in M_n$
      is defined as
       \[  \Delta ( f ) = (-1)^{ \frac 12 n (n -1 ) }\prod_{i\neq j}(\alpha_i-\alpha_j) =  \prod_{1\leq i < j\leq n }(\alpha_i-\alpha_j)^2  ,\]
         where $\alpha_i$ ($i=1, \dots , n$)
       are the roots of $f$. Empty product is considered to be equal to $1$.  For an irreducible polynomial $ h (x )$ over $F$,
        we denote the field $ F [ x] / ( h( x ) )$ by $F_h $ and if $\chi $ is a quasi-character of $F^*$, we denote
         $ \Gamma_h ( \chi )$ the Gamma function  $ \Gamma_{ F_h } ( \chi \circ \N ) $.

	For quasi-characters $\alpha,\beta,\gamma$ of $F^*$ in the region $R_n$ given by
	 \begin{equation}\label{region}
	 \begin{split}
	 &\re \alpha,\re\beta,\re\gamma>0;\\
	  &\re \alpha+\re\beta+2(n-1)\re\gamma<1,
	  \end{split}
	 \end{equation}
	we define the Selberg integral over $F$ as
	\begin{equation}\label{sel}
	S_n(\alpha,\beta,\gamma)
	=\alpha^n\gamma^{\frac{n(n-1)}{2}}(-1) \int_{f\in M_{n}}\alpha\chi_0^{-1}(f(0))\beta\chi_0^{-1}(f(1))\gamma\chi_0^{-\frac1{2}}(\Delta(f))\prod_{i=1}^l\frac{\Gamma_{h_i}(\gamma)}{\Gamma(\gamma)^{\deg h_i}}\dd f
	\end{equation}
	where we write $f(x)=\prod_{i=1}^l h_i(x)$ with $h_i$ monic irreducible polynomial over $F$.
Note that polynomials with zero discriminant have measure zero, so we may assume $f$ has no repeated roots.

	We have the following theorem that generalizes Selberg integral formula (\ref{selberg}),
	\begin{theorem}\label{thm}   The integral $S_n(\alpha,\beta,\gamma)$  converges for $ ( \alpha , \beta , \gamma ) \in R_n $ and
            	\begin{equation}\label{formula}
		 S_n(\alpha,\beta,\gamma)=\prod_{j=0}^{n-1}\frac{\Gamma(\alpha\gamma^{j})\Gamma(\beta\gamma^{j})\Gamma(\gamma^{j+1})}{\Gamma(\alpha\beta\gamma^{n+j-1})\Gamma(\gamma)}.
		\end{equation}
	\end{theorem}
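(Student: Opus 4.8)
The plan is to prove \eqref{formula} by induction on $n$, with Tate's local functional equation serving as the basic analytic input, in the same way the Euler Beta integral underlies the classical formula \eqref{selberg}. Before running the induction I would dispose of convergence: the integrand in \eqref{sel} is singular along the three loci $f(0)=0$, $f(1)=0$ and $\Delta(f)=0$, and the hypotheses $\re\alpha,\re\beta,\re\gamma>0$ guarantee local integrability along each of them, while the bound $\re\alpha+\re\beta+2(n-1)\re\gamma<1$ controls the growth of the integrand as the coefficients of $f$ go to infinity. The remaining argument is then carried out for $(\alpha,\beta,\gamma)\in R_n$, and the full identity is obtained by analytic continuation in the three quasi-characters.

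For the base case $n=1$ one has $M_1=\{x-t\}$, so that $f(0)=-t$, $f(1)=1-t$, $\Delta(f)=1$, while the single linear factor contributes $\Gamma_{h_1}(\gamma)/\Gamma(\gamma)=1$ because $F_{h_1}\cong F$. After absorbing the prefactor $\alpha(-1)$, the definition \eqref{sel} collapses to the local Beta integral
\[
S_1(\alpha,\beta,\gamma)=\int_F \alpha\chi_0^{-1}(t)\,\beta\chi_0^{-1}(1-t)\,dt,
\]
and Tate's functional equation evaluates this to $\Gamma(\alpha)\Gamma(\beta)/\Gamma(\alpha\beta)$, which is exactly \eqref{formula} at $n=1$.

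The heart of the argument is a recursion expressing $S_n$ through $S_{n-1}$ with suitably shifted parameters, together with an explicit ratio of $\Gamma$-factors. To produce it I would stratify $M_n$ by the factorization type of $f=\prod_i h_i$ and, on each stratum, pass from the coefficients of $f$ to its roots, which lie in the product of the fields $F_{h_i}$. Under this change of variables the Haar measure $\dd f$ transforms by a Jacobian that is a power of $|\Delta(f)|_F$ together with resultant contributions arising from $\Delta(gh)=\Delta(g)\Delta(h)\,\res(g,h)^2$; the weights $\Gamma_{h_i}(\gamma)/\Gamma(\gamma)^{\deg h_i}$ appearing in \eqref{sel} are precisely the local densities that make this unfolding uniform across all strata, converting the integral over $M_n$ into a genuine Selberg-type integral over the roots. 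I would then peel off the integration over a single root, lying in some extension $E/F$, evaluate it as a Beta-type integral via Tate's functional equation as in the base case, and refold the remaining $n-1$ roots back into $S_{n-1}$.

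The main obstacle is this recursion, and within it two points demand care. The first is the bookkeeping of factorization types: one must verify that summing the unfolded contributions over all splitting types of $f$, and over all extensions in which the peeled-off root may lie, reassembles exactly into $S_{n-1}$, with the $\Gamma_{h_i}$-weights accounting for the measure attached to each type. The second is checking that the one-variable integration yields precisely the $\Gamma$-ratio dictated by \eqref{formula}; here the interplay between the character $\gamma\chi_0^{-1/2}(\Delta(f))$ and the norm maps $\N\colon F_{h_i}\to F$ must be tracked carefully, so that the relation $\Gamma_{h_i}(\gamma)=\Gamma_{F_{h_i}}(\gamma\circ\N)$ and the identity $\re(\gamma\circ\N)=\re\gamma$ generate the shifts $\gamma^{j}$ that appear in the product. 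Once the recursion is established with the correct ratio, matching it against the telescoping of the right-hand side of \eqref{formula} and invoking the base case completes the induction, and analytic continuation removes the restriction to $R_n$.
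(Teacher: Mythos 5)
Your induction skeleton and base case match the paper (the $n=1$ case is exactly the local beta integral \eqref{2.4}), and your convergence remarks point in the right direction, but the heart of your argument --- the recursion step --- contains a genuine gap. You propose to unfold the integral over $M_n$ to an integral over the roots, ``peel off'' one root $t$, evaluate that one-variable integral ``as a Beta-type integral via Tate's functional equation,'' and refold the rest into $S_{n-1}$. This step fails: once the other $n-1$ roots are fixed, the integrand in the peeled variable is of the form $\alpha\chi_0^{-1}(t)\,\beta\chi_0^{-1}(1-t)\,\prod_{i}\gamma\chi_0^{-1}(t-t_i)$ (up to norms when the root lies in an extension), which has singularities at $0$, $1$, \emph{and} at each of the remaining roots $t_i$. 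Tate's functional equation and the beta formula \eqref{2.4} only evaluate integrals with two singular points; there is no closed-form evaluation of this $(n+1)$-point integral, and its value is not a ratio of Gamma factors times something independent of the $t_i$. This is precisely the classical obstruction that makes the Selberg integral hard, and no amount of bookkeeping over factorization types removes it. (A secondary inaccuracy: the weights $\Gamma_{h_i}(\gamma)/\Gamma(\gamma)^{\deg h_i}$ are not the Jacobian densities of the root-unfolding --- that Jacobian is $|\Delta(f)|_F^{1/2}$ with a stratum-dependent multiplicity; the weights arise for a different reason, see below. Also, the theorem is asserted only on $R_n$, so your final appeal to analytic continuation is not needed and would itself require justification.)

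The paper avoids this obstruction with Anderson's double-integral device, which is the idea missing from your proposal. One introduces the auxiliary integral $T_n$ over pairs $(P,Q)\in M_{n-1}\times M_n$ with integrand $\alpha\chi_0^{-1}(Q(0))\,\beta\chi_0^{-1}(Q(1))\,\gamma\chi_0^{-1}(R(P,Q))$: the resultant $R(P,Q)$ couples the roots of $P$ to the roots of $Q$ but neither family to itself, so no discriminant factor appears in either inner integral. For fixed $G$ (playing the role of $P$ or $Q$), the resultant factors as a product of norms, $R(G,f)=\prod_i \N_{g_i}(\varphi_i(f))$, and Lagrange interpolation identifies $M_{n-1}$ (resp.\ $M_n$) with an affine hyperplane $\phi^{-1}(1)$ inside $F[x]/(G)$; the inner integral is then a genuine multivariate beta integral over extensions of $F$ (Lemma \ref{lemma2}), evaluated in closed form by the Gamma-multiplication trick. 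This is where the factors $\Gamma_{g_i}(\gamma)$ actually come from, via $R(Q,Q')=\pm\Delta(Q)$ when the $P$-integration is performed first. Computing $T_n$ both ways (Propositions \ref{prop1} and \ref{prop2}) and applying Fubini--Tonelli on $R_n$ yields the recursion $S_n(\alpha,\beta,\gamma)=S_{n-1}(\alpha\gamma,\beta\gamma,\gamma)\,\Gamma(\alpha)\Gamma(\beta)\Gamma(\gamma^n)/\bigl(\Gamma(\alpha\beta\gamma^{n-1})\Gamma(\gamma)\bigr)$, which telescopes to \eqref{formula}. Without this decoupling device (or an equivalent one), your induction cannot be closed.
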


  In the case $F$ is a finite field, we consider the Gauss sums for $F$ as an analog of the Gamma factors, then the factor $ \prod_{i=1}^l\frac{\Gamma_{h_i}(\gamma)}{\Gamma(\gamma)^{\deg h_i}}$ is equal to $1$
   by the Hasse-Davenport relation, so this term does not appear in the finite field generalization of Selberg integral in  \cite{E1} \cite{An} (see also
        \cite{AAR}). For the case $F={\mathbb C}$, since all $ F_h = {\mathbb C}$, the above factor is also equal to $1$, (\ref{formula}) reduces to Aomoto's generalization
         of Selberg integral \cite{Ao}, see the end of Section \ref{section3} for more detail.
        While Aomoto \cite{Ao} considers (\ref{sel}) for $F={\mathbb{C}}$ as a pairing in certain twisted de Rham cohomology and homology, our integral is just the ordinary integral.
         Our domain of convergence (\ref{region}) for the case $F={\mathbb{C}}$ is contained in Aomoto's defining domain for (\ref{sel}). 
          Note that only the unramified quasi-characters $\alpha , \beta , \gamma $ are considered in \cite{Ao}.
    We also remark that for general $F$, $ f \mapsto \prod_{i=1}^l\frac{\Gamma_{h_i}(\gamma)}{\Gamma(\gamma)^{\deg h_i}}$
     is a locally constant function on the region
      $ M_n -\{  f \, | \, \Delta ( f ) = 0  \} $,  which is a disjoint unions of open sets
       of $f$'s with  $ F [ x ] / ( f ( x ) ) $ isomorphic to a direct product of extensions of $F$ of fixed types.

    We like to comment that (\ref{formula}) for the case $F={\mathbb R}$ is not equivalent to the original Selberg integral (\ref{selberg}), which
     should be understood as (\ref{formula}) for $({\mathbb R}_{\geq 0 } , + , \cdot ) $.

     We prove Theorem \ref{thm} by   evaluating a double integral in two different ways, which
      give a recursive formula  relating $S_n$ to $S_{n-1} $. This method is due to Anderson \cite{An} \cite{An2}.

    This paper is organized as follows. In Section \ref{section2}, we prove an extension of beta integral and two corollaries that are used in the
     proof of Theorem \ref{thm}.  In Section \ref{section3}, we prove our main theorem and compare our formula in the case $F={\mathbb{C}}$ with Aomoto's.

\

	\section{\large  Generalized Beta Integrals. }\label{section2}

	\begin{lemma}\label{lemma1}
		Let $ V$ be a finite dimensional vector space over $F$, $d x $ be a Haar measure on $V$ as an additive group.  If
		$ N : V \to  {\R}_{\geq 0 } $ is a $F$-norm, then for ${\rm re } \, s <  - {\rm dim}_F V $,
		\begin{equation}
		\int_{N( x )\geq r }    N ( x )^s  d x
		\end{equation}
		converges for any $ r > 0 $.
	\end{lemma}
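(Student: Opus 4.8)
The plan is to reduce the convergence to a single geometric series by slicing the region $\{N(x)\ge r\}$ into multiplicative shells and exploiting the homogeneity $N(ax)=|a|_F\,N(x)$. Write $d=\dim_F V$. Since the absolute value on $F$ is nontrivial, I would fix an element $c\in F^*$ with $|c|_F>1$ (for non-archimedean $F$, take $c=\pi^{-1}$ with $\pi$ a uniformizer; for $F=\R$ or $\C$, any $c$ with $|c|_F>1$). Set $\lambda=|c|_F>1$ and, for $k\ge 0$, define the shell $A_k=\{x\in V:\lambda^k r\le N(x)<\lambda^{k+1}r\}$, with base shell $A_0$. Because $\lambda>1$ and $\lambda^k\to\infty$, every $x$ with $N(x)\ge r$ lies in exactly one $A_k$, so that $\{N(x)\ge r\}=\bigsqcup_{k\ge 0}A_k$ as a disjoint union.

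The key step is the scaling identity $A_k=c^kA_0$: by homogeneity $N(c^ky)=\lambda^kN(y)$, so $y\in A_0$ if and only if $c^ky\in A_k$. Multiplication by $c$ is an $F$-linear automorphism of $V$ with determinant $c^d$, hence scales the Haar measure by $|c^d|_F=\lambda^d$; iterating gives $\vol(A_k)=\lambda^{kd}\,\vol(A_0)$. Since $\re s<-d<0$, the function $N(x)^{\re s}$ is decreasing in $N(x)$, so on $A_k$ one has $|N(x)^s|=N(x)^{\re s}\le(\lambda^k r)^{\re s}$, whence
\[
\int_{A_k}N(x)^{\re s}\,dx\ \le\ (\lambda^k r)^{\re s}\,\vol(A_k)\ =\ r^{\re s}\,\vol(A_0)\,\lambda^{k(\re s+d)}.
\]
Summing over $k\ge 0$ produces a geometric series with ratio $\lambda^{\re s+d}$, which is $<1$ precisely because $\lambda>1$ and $\re s+d<0$. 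Thus the total integral is bounded by $r^{\re s}\,\vol(A_0)/(1-\lambda^{\re s+d})<\infty$, giving absolute convergence exactly in the stated range $\re s<-\dim_F V$. This argument is uniform in the archimedean and non-archimedean cases, the only field-dependent ingredient being the choice of $c$.

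The one point needing care — and the only real obstacle — is the finiteness of the base shell, $\vol(A_0)<\infty$. I would invoke the standard fact that all $F$-norms on a finite-dimensional space over a complete (hence local) field are equivalent, so $N$ is continuous and proper: then $A_0\subseteq\{x:N(x)<\lambda r\}$ is bounded, hence relatively compact, and compact sets have finite Haar measure; continuity of $N$ also makes each $A_k$ Borel measurable, justifying the decomposition above. I expect this measure-finiteness input (equivalently, the properness of $N$) to be the crux, while the remaining estimates are the routine geometric-series bookkeeping displayed above.
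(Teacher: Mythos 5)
Your proof is correct, and it takes a genuinely different route from the paper's. The paper argues by cases: for $F=\R,\C$ it cites standard calculus, and for non-Archimedean $F$ it invokes equivalence of norms to replace $(V,N)$ by a degree-$n$ field extension $E/F$ with $N(x)=|x|_E^{1/n}$, reducing everything to the one-variable fact that $\int_{|x|_E\ge r}|x|_E^{\sigma}\,dx<\infty$ for $\sigma<-1$. Your multiplicative-shell decomposition, with $A_k=c^kA_0$, $\vol(A_k)=|c|_F^{kd}\,\vol(A_0)$, and a geometric series of ratio $|c|_F^{\re s+d}<1$, is uniform across the Archimedean and non-Archimedean cases and never needs to put a field structure on $V$ (the paper's reduction implicitly uses that a non-Archimedean local field admits extensions of every degree, which is exactly why $\R$ and $\C$ must be treated separately there). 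From norm equivalence you extract only what is really needed: continuity of $N$, so the shells are measurable, and finiteness of $\vol(A_0)$, which follows because closed bounded sets in $V$ are compact. One caveat on wording: your parenthetical ``complete (hence local)'' is backwards --- completeness does not imply local compactness (e.g.\ $\C_p$ is complete but not locally compact); what makes the compactness step work is that $F$ is a local field by hypothesis, hence locally compact, and norm equivalence over a complete field gives continuity and properness of $N$. With that reading your argument is complete: it trades the paper's brevity, which comes from outsourcing to the known one-dimensional integral, for a self-contained and more uniform proof.
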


	\begin{proof}   The cases $F={\mathbb R}, {\mathbb C}$ are standard exercises in calculus. We assume $F$ non-Archimedean. Let $ n = {\rm dim}_F V $.
	Since all the norms on $V$ are equivalent, we may assume that $V$ is a field extension of $F$ and $ N ( x ) = | x|_V^{ \frac 1 n } $.
	The result follows from the fact that $ \int_{ | x | > r } | x |_V^{\frac s n } d x $
	converges when $  \frac 1 n  {\rm re } \, s < -1 $.
	\end{proof}

It is known that $\rho ( c )$ for ${\rm re} \, c > 0 $  can be written as an integral
	\begin{equation}
	\rho ( c ) = c(-1)   \int_{ F }  \psi ( x ) c ( x ) | x |_F^{-1} dx  :=   c(-1) \lim_{ r \to \infty}   \int_{  |x| \leq r }  \psi ( x ) c ( x ) | x |_F^{-1} dx .
	\end{equation}
 Equivalently, the Gamma function of $c$ can be written as
	\begin{equation}\label{2.3}
	\Gamma(c)=c(-1)\rho(c)=\int_{ F }  \psi ( x ) c ( x ) | x |_F^{-1} dx,
	\end{equation}
  see \cite{GGPS}.  This formula is used to prove the beta integral formula \cite{GGPS}:
  \begin{equation}\label{2.4}
      \int_{ F} c_1 ( x ) | x|^{-1} c_2 ( 1-x ) | 1- x |^{-1} d x = \frac{ \Gamma (c_1 ) \Gamma (c_2 ) } { \Gamma (c_1 c_2 ) }.
  \end{equation}
  where the convergence region is $ {\rm re} \, c_1 > 0 , {\rm re } \, c_2 > 0 , {\rm re} \, c_1 + {\rm re} \, c_2 < 1 $.
   See \cite{Tai} page 61 for a detailed proof.  We will need the following generalization of (\ref{2.4}).
	
	\begin{lemma}\label{lemma2}   Let $  E_1 , \dots , E_k $ be finite extensions of $F$ of degrees $d_1 , \dots , d_k $ with $ d = d_1 + \dots + d_k \geq 2 $,
		$ {\rm Tr}_i : E_i \to F$ be the trace map,  $ \phi :  E_1 \times \dots \times E_k \to F$ be the $F$-linear map
		given by
		\begin{equation} \phi ( x_1 , \dots , x_k ) =  \sum_{ i =1 }^k  {\rm Tr}_i (  x_ i ). \end{equation}
		Let $ S= \phi^{-1} ( 1) $ and $ds $ be the measure on $S$ uniquely determined by the conditions that it is invariant under translations by $\phi^{-1} ( 0 )$
        and that the map
         \begin{equation} F^* \times S \to   E_1 \times \cdots \times E_k ,      ( a  , ( x_1, \dots , x_k ) ) \mapsto ( a x_1 , \dots , a x_k ) \end{equation}
         changes the measure $ |a|^{d-1} d a \, d s $ to $ dy_1 \cdots d y_k $ (where $da, dy_i $ are self-dual measures
          defined in Section \ref{section1}).
        	If
        $c_1 , \dots , c_k$ are quasi-characters on $E_1^*, \dots , E_k^*$ with
		$ {\rm re} \, c_i > 0  $ for all $i$ and
		\begin{equation}  d_1 {\rm re} \, c_1 + \dots + d_k {\rm re } \, c_k < 1.   \end{equation}
		then the integral
		\begin{equation}\label{2.5}
		\int_{  S }   c_1 (x_1 ) |x_1|_{E_1}^{-1}   \cdots c_k ( x_k ) |x_k|_{E_k}^{-1} d s
		\end{equation}
		converges  and is equal to
		\begin{equation}\label{2.6}
		\frac { \prod_{ i=1}^k \Gamma_{ E_i} ( c_i ) } { \Gamma ( c ) }
		\end{equation}
		where $c= (c_1|_{F^*} )\cdots (c_k|_{F^*})$.
	\end{lemma}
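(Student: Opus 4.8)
The plan is to generalize the proof of the beta integral (\ref{2.4}): I would evaluate a single integral over $V=E_1\times\cdots\times E_k$ in two ways, using that the measure $ds$ is defined precisely so that the dilation $(a,(x_1,\dots,x_k))\mapsto(ax_1,\dots,ax_k)$ turns $|a|^{d-1}\,da\,ds$ into $dy_1\cdots dy_k$. Write $I$ for the integral (\ref{2.5}) and set $c=(c_1|_{F^*})\cdots(c_k|_{F^*})$, and consider
\[ J=\int_{V}\psi(\phi(y))\prod_{i=1}^{k}c_i(y_i)|y_i|_{E_i}^{-1}\,dy_1\cdots dy_k. \]
Since $\psi(\phi(y))=\prod_i\psi(\Tr_i(y_i))=\prod_i\psi_{E_i}(y_i)$, the integrand factors over the $E_i$, and the integral representation (\ref{2.3}) of the Gamma function applied on each $E_i$ gives $J=\prod_{i=1}^{k}\Gamma_{E_i}(c_i)$. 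On the other hand, substituting $y_i=ax_i$ with $(a,x)\in F^*\times S$ and using $\phi(ax)=a$, $c_i(ax_i)=c_i(a)c_i(x_i)$ and $|ax_i|_{E_i}=|a|_F^{d_i}|x_i|_{E_i}$, the dilation weight $|a|^{d-1}$ exactly cancels the factor $|a|_F^{-d}$ coming from the norms, leaving
\[ J=\Bigl(\int_{F^{*}}\psi(a)c(a)|a|_F^{-1}\,da\Bigr)\Bigl(\int_{S}\prod_{i=1}^{k}c_i(x_i)|x_i|_{E_i}^{-1}\,ds\Bigr)=\Gamma(c)\,I, \]
again by (\ref{2.3}). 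Equating the two evaluations of $J$ gives $\prod_i\Gamma_{E_i}(c_i)=\Gamma(c)\,I$, which is (\ref{2.6}).

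Before this I would establish the convergence of $I$. Writing $\sigma_i=\re c_i>0$, on $S$ the integrand has absolute value $\prod_i|x_i|_{E_i}^{\sigma_i-1}$. Along the loci where a coordinate $x_i$ vanishes, $|x_i|_{E_i}^{\sigma_i-1}$ is locally integrable on $E_i$ precisely because $\sigma_i>0$, so the bounded part of $S$ contributes finitely. For the tail I would appeal to Lemma \ref{lemma1}, applied on $S\cong F^{d-1}$ after decomposing the region at infinity according to which coordinates grow: the borderline case, in which all coordinates grow proportionally so that $\prod_i|x_i|_{E_i}^{\sigma_i-1}$ behaves like $N(x)^{\sum_i d_i\sigma_i-d}$ for an $F$-norm $N$, forces the sharp condition $\sum_i d_i\sigma_i<1$. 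This is exactly the hypothesis $d_1\re c_1+\cdots+d_k\re c_k<1$, and it coincides with $\re c<1$ because $c_i|_{F^*}$ has real part $d_i\sigma_i$.

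The main obstacle is that the separation of variables above is not literally valid, since the Gamma integral (\ref{2.3}) is only conditionally convergent—$\psi(x)c(x)|x|_F^{-1}$ is never absolutely integrable—so neither $J$ nor its factorisation is covered by Fubini's theorem. To repair this I would first prove the identity for an arbitrary Schwartz function $\Phi$ in place of $\psi$, namely
\[ \int_{V}\Phi(\phi(y))\prod_{i=1}^{k}c_i(y_i)|y_i|_{E_i}^{-1}\,dy=\Bigl(\int_{F}\Phi(a)c(a)|a|_F^{-1}\,da\Bigr)\,I, \]
where under the hypotheses $\sigma_i>0$ and $\sum_i d_i\sigma_i<1$ every integral converges absolutely: the decay of $\Phi$ controls the directions in which $\phi(y)\to\infty$, while $\sigma_i>0$ and $\sum_i d_i\sigma_i<1$ control the coordinate loci and the directions inside $\phi^{-1}(0)$, so that the dilation change of variables and Fubini are justified. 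I would then pass to $\Phi=\psi$ by approximation, taking $\Phi$ to be $\psi$ times an expanding cutoff and truncating by $|\phi(y)|=|a|$, an exhaustion compatible with the dilation, and identify the limits of the two sides with $\prod_i\Gamma_{E_i}(c_i)$ and $\Gamma(c)\,I$ via (\ref{2.3}). The delicate step—where the oscillation of $\psi$ must be used to control the non-absolutely-convergent tails—is precisely the one carried out for the beta integral in \cite{Tai} and \cite{GGPS}, and the same estimates apply because the dilation-compatible truncation reduces the $V$-integral to a product of one-dimensional Tate integrals over the $E_i$. Alternatively, one may keep $\Phi=\widehat{g}$ Schwartz throughout and evaluate the left-hand side a second time by Fourier inversion, reducing everything to Tate's local functional equation (\ref{1.3}); choosing $g$ with $\int_F g\,c^{-1}\neq0$ then lets me cancel the common scalar and solve for $I$, yielding (\ref{2.6}).
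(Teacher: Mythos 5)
Your evaluation step is the paper's own computation, just read in the opposite direction: the paper starts from $\Gamma(c)\cdot I$ (with $I$ the integral (\ref{2.5})), writes $\Gamma(c)$ via (\ref{2.3}), merges the two integrals into a single absolutely-indexed integral over $F^*\times S\cong E_1\times\cdots\times E_k$ using the defining property of $ds$, and factors the result as $\prod_i\Gamma_{E_i}(c_i)$; your $J$ is exactly that middle quantity. Where you genuinely differ is in the supporting analysis. For convergence, the paper argues by induction on $k$ and $d$: the case $k=1$ by translating by $v$ with $\phi(v)=1$ and applying Lemma \ref{lemma1} to $\phi^{-1}(0)$, the case $k=2$, $d_1=d_2=1$ by quoting the beta integral (\ref{2.4}), and the step $k\geq 3$ by splitting the integral into two shorter integrals of the same shape. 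Your direct stratification at infinity can be made to work, but note that the domination $\prod_i|x_i|_{E_i}^{\sigma_i-1}\leq C\,N(x)^{\sum_i d_i\sigma_i-d}$ holds only on the stratum where all coordinates grow comparably; on a stratum where some $x_i$ stays bounded the integrand is not dominated by any negative power of $N(x)$ (the factor $|x_i|_{E_i}^{\sigma_i-1}$ is unbounded there), so those strata force you to integrate the bounded coordinates first and recurse on the rest --- carried out in full, this recursion is essentially the paper's induction, so the two convergence proofs buy the same thing at comparable cost. On the rigor of the evaluation, your point is well taken and is in fact something the paper passes over in silence: (\ref{2.3}) is only conditionally convergent, and the paper performs precisely the formal Fubini-type manipulation you flag, without comment. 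Your regularization --- prove the identity with a Schwartz $\Phi$ in place of $\psi$, where everything converges absolutely, then pass to $\psi$ along the dilation-compatible truncation $\{|\phi(y)|\leq r\}$, whose right-hand side tends to $\Gamma(c)I$ by definition --- is a sound way to close this; the one step you defer to \cite{Tai} and \cite{GGPS}, namely that the truncated left-hand sides tend to $\prod_i\Gamma_{E_i}(c_i)$ (i.e.\ that the exhaustion by $|\phi(y)|\leq r$ computes the same limit as the coordinate-wise exhaustions defining the $\Gamma_{E_i}(c_i)$), is the real content of that layer and is not literally in those references, which treat only the case $k=2$, $d_1=d_2=1$; it still has to be written out, as does the Fubini interchange in your alternative route via $\Phi=\widehat{g}$, which is again not absolutely convergent. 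In short: same key identity; a different but completable convergence argument; and an added layer of rigor that the paper itself does not supply.
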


	\begin{proof}   We prove first the convergence by induction.  We may assume all $ c_i $ are $ {\R}_{>0}$ valued,
	so $ c_i ( x) = |x|_{E_i}^{r_i} , r_i \in {\R}$.
	If $k=1 $, then $ d_1 > 1 $, let $  v \in E_1 $ satisfy $\phi ( v ) =1 $,
	the integral is
	\begin{equation} \int_{ S }   c_1 (x_1 ) |x_1|_{E_1}^{-1} d s =   \int_{ \phi^{-1} ( 0 ) }  | y + v |_{E_1}^{r_1 -1} d y \end{equation}
	where we change the variable $ x_1 \to y + v $ and $d y $ is the Haar measure on $\phi^{-1} ( 0 ) $ induced from $ds$ on $ S$.
	Since $ y + v $ is never $0$, the integrand has no finite singular point, it is enough to prove
	\begin{equation}   \int_{ y: y \in \phi^{-1} ( 0 ), | y | \geq r }   | y + v |_{E_1}^{r_1 -1} d y < \infty \end{equation}
	for any $r>0$. Note
	\begin{equation}   | y + v |_{E_1}^{r_1 -1}   <  C  | y  |_{E_1}^{{\rm re} \, c_1 -1} \end{equation}
	for $ y$ with $ |y|_{E_1} > r $ ($C$ is a constant depending on $r$). So it is enough to prove
	\begin{equation} \int_{  | y |_{E_1} \geq r }    | y  |_{E_1}^{ r_1  -1} < \infty. \end{equation}
	Notice that $ y \mapsto  | y  |_{E_1}^{\frac 1 { d_1 } } $ is an $F$-norm. The result follows from Lemma \ref{lemma1} for $V= \phi^{-1} ( 0 )$.
	
	The case $k=2$ and $ d_1=d_2 =1 $ is (\ref{2.4}).
	The case  $ k=2 $ and $d_1 + d_2 > 2 $ can be proved using the induction on $d_1 + d_2 $.
	For $ k \geq 3 $, we have
	\begin{equation}
	\begin{split}
        &\int_S  c_1 ( x_1 ) |x_1|_{E_1}^{-1} \cdots c_k ( x_k  )|x_k|_{E_k}^{-1} d x  \\
       =&    \int_{ {\rm Tr}\,  x_1 + {\rm Tr}\, x_2 = 1 } c_1 ( x_1 ) |x_1|_{E_1}^{-1} c_2 ( x_2 ) |x_2|_{E_2}^{-1}  d ( x_1 , x_2 ) \\
        &\cdot  \int_{ a +{\rm Tr}\, x_3 + \dots + {\rm Tr}\, x_k  =1 }   | a |^{ d_1 r_1 + d_2 r_2 -1 } c_3 ( x_3 ) |x_3|_{E_3}^{-1}  \cdots c_k ( x _ k ) |x_k|_{E_k}^{-1} d (a , x_3 , \dots , x_k ).
	\end{split}		
	\end{equation}
	where $d ( x_2 , x_3)$ is  certain measure on $ {\rm Tr}\,  x_1 + {\rm Tr}\, x_2 = 1 $ invariant under the translations by $ {\rm Tr}\,  x_1 + {\rm Tr}\, x_2 = 0 $
  and
$ d (a , x_3 , \dots , x_k )$ is certain measure on $  a +{\rm Tr}\, x_3 + \dots + {\rm Tr}\, x_k  =1 $ invariant under the translations by
  $  a +{\rm Tr}\, x_3 + \dots + {\rm Tr}\, x_k  = 0 $.
Both integrals on the right converge by induction assumption.
	Finally, we have
	\begin{equation}
	\begin{split}
        & \Gamma ( c )  \int_{  S }   c_1 (x_1 ) |x_1|_{E_1}^{-1}   \cdots c_k ( x_k ) |x_k|_{E_k}^{-1} d s \\
		= &\int_F \psi ( a ) c ( a ) |a|^{-1 } da   \int_{  S }   c_1 (x_1 ) |x_1|_{E_1}^{-1}   \cdots c_k ( x_k ) |x_k|_{E_k}^{-1} d s \\
		=&    \int_{ F\times  S }  \psi ( {\rm Tr} (ax_1 + \dots + ax_k)  )   c_1 ( ax_1 ) |a x_1|_{E_1}^{-1}   \cdots c_k (a x_k ) |a x_k|_{E_k}^{-1}  |a|^{ d -1 } d s da \\
		=& \int_{ E_1 \times \dots \times E_n}  \psi ( {\rm Tr} ( y_1 ) +  \dots + {\rm Tr} ( y_k  ) )c_1 (y_1 ) |y_1|_{E_1}^{-1}   \cdots c_k ( y_k ) |y_k|_{E_k}^{-1} \textstyle\prod d y_i \\
		=& \prod_{ i=1}^k \Gamma_{E_i} ( c_i ).
	\end{split}
	\end{equation}
	\end{proof}
	
  We remark that if the map $\phi $ in Lemma \ref{lemma2} is replaced by $ \phi ( x_1 , \dots , x_k ) =  \sum_{ i =1 }^k  {\rm Tr}_i ( a_i x_ i )$ for $a_i \in E_i^*$,
   the integral (\ref{2.5}) is convergent under the same conditions on $ {\rm re} \, c_i $'s and the result
    is (\ref{2.6}) times $ \prod_{i=1}^k c_i ( a_i^{-1} )$. This can be proved using Lemma \ref{lemma2} and the change of variable $ a_i x_i \mapsto x_i $.

	For any $g\in F[x]$, let $n=\deg g$, and denote $F[x]/(g(x))$ by $F_g$ and equal $F_g$ with $F^{n}$ via the map
	\begin{equation}
	 \eta' : F_g \rightarrow F^{n}, f(x)= b_{n-1}x^{n-1}+b_{n-2}x^{n-2}+\cdots+b_0 \mapsto (b_{n-1},b_{n-2},\cdots,b_0).
	 \end{equation}
	Let $\dd_g f$
	be the measure inherits from the product haar measure of $F^{n}$.
	
	Let $G(x)$ be a monic separable polynomial over $F$. Assume $G(x) = \prod_{i=1}^k g_i(x)$, with $g_i(x)$ different monic irreducible polynomials over $F$. Then we have an isomorphism: $\varphi:\  E = F[x]/(G(x)) \rightarrow \prod_{i=1}^k F[x]/(g_i(x))$, such that  \begin{equation} \varphi(f)=(\varphi_1(f),\varphi_2(f),\cdots,\varphi_k(f))= (f\text{ mod } {g_1},f\text{ mod } {g_2},\cdots,f\text{ mod } {g_k}).\end{equation}

     Let $g_i(x) = \prod_{j=1}^{d_i}(x-\alpha_{ij})$ with $\alpha_{ij}\in\overline{F}$, $d_i = \deg g_i$, $1\le i\le k$. Define the Trace and Norm maps on  $F_{g_i}$,
	\begin{equation}
	\Tr_{g_i}(f) := \sum_{j=1}^{d_i}f(\alpha_{ij})\quad \text{and}\quad \N_{g_i}(f) := \prod_{j=1}^{d_i}f(\alpha_{ij}).
	\end{equation}
 They are just usual trace and norm for field extension $ F_{ g_i } $ over $F$.
 Let $\psi_i : F_{g_i} \to S^1 $ be the additive character  $\psi_i (f) = \psi(\Tr_{g_i} f ) $, it defines
  the Fourier transform on $\mathcal{S}(F_{g_i})$ as ${\cal F} h(y) := \int_{F_{g_i}}h(x)\psi_i (xy)\dd_i x$, where $\dd_i x$ is the unique measure on $F_{g_i}$ such that
   ${\cal F}$ is an isometry.  We need to know the relations of Haar measures $ \dd_i x $ and $ \dd_{g_i}x $. For this purpose, we prove

   \begin{lemma}\label{lemma2.3}  Let $  D$ be a $n\times n$ non-degenerate symmetric matrix over $F$,  and $ d_D x $ be the unique Haar measure on $F^n$ such that
    the Fourier transform
    \begin{equation}
     {\cal F}_D f ( y ) = \int_{F^n } f ( x ) \psi \left(  x^T D y  \right)  d_D x
     \end{equation}
   is an isometry.  Then $  d_D x = | {\rm det} \, D|_F^{\frac 12 } d x $, where $ dx = d x_1 \cdots d x_n $ is the product measure of the self-dual measure on $F$ determined by $\psi$.
   \end{lemma}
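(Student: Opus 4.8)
The plan is to reduce the twisted transform $\mathcal{F}_D$ to the standard product Fourier transform $\mathcal{F}_0$ attached to the pairing $(x,y)\mapsto\psi(x^Ty)$ and the product measure $dx$, for which Plancherel already holds by the self-duality of $dx$, and then to track the single scaling constant that relates the two measures. Since any two Haar measures on $F^n$ differ by a positive scalar, I would begin by writing $d_D x = c\,dx$ for an unknown $c>0$, and observe the key identity $\mathcal{F}_D f(y) = c\int_{F^n} f(x)\,\psi\bigl(x^T(Dy)\bigr)\,dx = c\,(\mathcal{F}_0 f)(Dy)$, where $\mathcal{F}_0 f(z)=\int_{F^n} f(x)\,\psi(x^Tz)\,dx$. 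This converts the matrix $D$ sitting inside the additive character into an invertible linear change of the output variable, which is exactly what lets the standard Plancherel theorem be brought to bear.

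Next I would impose the defining isometry condition $\|\mathcal{F}_D f\|_{L^2(d_D)} = \|f\|_{L^2(d_D)}$ and evaluate both sides in terms of the self-dual measure $dx$. Using $d_D = c\,dx$ and the identity above, the left-hand square is $\int_{F^n}|\mathcal{F}_D f(y)|^2\,d_D y = c^3\int_{F^n}\bigl|(\mathcal{F}_0 f)(Dy)\bigr|^2\,dy$. I would then change variables $z=Dy$, for which the Haar scaling law gives $dz=|\det D|_F\,dy$, and apply Plancherel for $\mathcal{F}_0$, namely $\int_{F^n}|\mathcal{F}_0 f(z)|^2\,dz=\int_{F^n}|f(x)|^2\,dx$, to obtain $c^3\,|\det D|_F^{-1}\,\|f\|_{L^2(dx)}^2$. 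The right-hand square is simply $\|f\|_{L^2(d_D)}^2 = c\,\|f\|_{L^2(dx)}^2$.

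Comparing the two expressions yields $c^3\,|\det D|_F^{-1} = c$ for all $f$, hence $c^2=|\det D|_F$, and therefore $c=|\det D|_F^{1/2}$, taking the positive square root since $c>0$. This gives $d_D x = |\det D|_F^{1/2}\,dx$, as claimed.

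The only delicate points are measure-theoretic bookkeeping: the Haar scaling law $d(Dy)=|\det D|_F\,dy$ under the invertible map $D$, and keeping straight that the $L^2$-norms are taken against $d_D$ rather than $dx$, which is where the extra powers of $c$ enter. Symmetry and non-degeneracy of $D$ are what guarantee that $\mathcal{F}_D$ is a genuine self-dual transform, so that the isometry requirement indeed singles out a unique measure; but the determinant computation itself uses only invertibility of $D$. I expect no substantive obstacle beyond carefully carrying the three factors of $c$ through the Plancherel comparison.
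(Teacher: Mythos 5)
Your proof is correct, but it takes a genuinely different route from the paper's. The paper fixes the constant by evaluating $\mathcal{F}_D$ on explicit test functions and therefore splits into cases: for non-Archimedean $F$ it computes $\mathcal{F}_D 1_{R^n}$ using the conductor $\delta$ of $\psi$ (getting $C\,q^{-n\delta/2}\,1_{\pi^{-\delta}D^{-1}R^n}$ and then imposing the isometry), and for $F=\mathbb{R},\mathbb{C}$ it redoes the computation with Gaussians. You instead never touch a specific function: you observe $\mathcal{F}_D f = c\,(\mathcal{F}_0 f)\circ D$, invoke the already-known Plancherel identity for the self-dual product measure, and track the constant through the change of variables $z=Dy$ with $dz=|\det D|_F\,dy$, arriving at the same equation $c^3|\det D|_F^{-1}=c$ that the paper's test-function computation produces implicitly. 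Your argument is uniform across all local fields and shorter; the paper's is more concrete and makes the dependence on the conductor of $\psi$ visible, at the cost of the case analysis. One small remark: as you note yourself, symmetry of $D$ plays no role in the determinant computation --- non-degeneracy alone makes the pairing $(x,y)\mapsto\psi(x^TDy)$ perfect, which is what guarantees existence and uniqueness of the self-dual measure; the symmetry hypothesis is simply what holds in the paper's application (the trace form). Your bookkeeping of the three factors of $c$ is exactly right.
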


 \noindent {\it Proof.}  By the uniqueness of the Haar measure, we have $ d_D x = C d x $ for some positive scalar $C$.
   We first prove the case $F$ is non-Archimedean.  Let $R$ be the ring of integers in $F$, $\pi \in R$ be a local parameter, $q = | R/ \pi R |$. There exists $\delta \in {\mathbb Z}$ such that
    $ \psi ( \pi^{-\delta } R ) = 1$ and $ \psi ( \pi^{-\delta -1} R ) \ne 1$. Then
      $  {\cal F} 1_R ( x ) =   q^{ - \frac \delta 2}   1_{ \pi^{-\delta } R } ( y )  $, where $ 1_S$ denotes the characteristic function of set $S$, ${\cal F}$ is as in (\ref{1.2}).
   It is easy to see that
   \begin{equation}
      {\cal F}_D  1_{ R^n } ( x )   =   C  q^{ - \frac { n \delta } 2}     1_{   \pi^{-\delta } D^{-1} R^n } ( x )
      \end{equation}
   The condition that  that  ${\cal F}_D$ is an isometry implies that $ C = | {\rm det} \, D|_F^{\frac 12 } $. For $F={\mathbb R}$ or ${\mathbb C}$ we use Gaussian functions instead of $1_{ R^n}$ to get the result.
   \hfill $\Box $

\

    From the Lemma, it's easy to see that $\dd_i x = |\Delta(g_i)|_F^{\frac1{2}}\dd_{g_i}x$.
      For any quasi-character $\chi$ on $F^*$, $ \chi \circ \N $ is a quasi-character of $F_{ g_i}^*$, we write   $\Gamma_{E}(\chi\circ\N)=\Gamma_{g_i}(\chi).$
      The $F_{g_i}$-version of (\ref{2.3}) reads as
      \begin{equation}
       \Gamma_{g_i}(\chi) = \int_{F_{g_i}}\psi(\Tr_{g_i}(x))\chi(\N_{g_i}(x))|x|_{F_{g_i}}^{-1}\dd_i x,
       \end{equation}
       where $|x|_{F_{g_i}}$ is the absolute value on the field $F_{g_i}$ and we have $|x|_{F_{g_i}}=|\N_{g_i}(x)|_F$.
       \
	
	Similarlly, define Trace and Norm map on $F_{G}$ as
	\begin{equation}
	\Tr_G(f) := \sum_{i=1}^{k}\Tr_{g_i}(\varphi_i(f))\quad \text{and}\quad \N_G(f) := \prod_{i=1}^{k}\N_{g_i}(\varphi_i(f)).
	\end{equation}
	
	Let $\psi : F_{G} \to S^1 $ be the additive character  $\phi (f) = \psi(\Tr_{G} f ) $, it defines the Fourier transform on $\mathcal{S}(F_{G})$ as ${\cal F}_G h (y) := \int_{F_{G}}h(x)\phi(xy)\dd_* x$, where
 $\dd_* x$ is the Haar measure such that ${\cal F}_G$ is an isometry.
   Using Lemma \ref{lemma2.3},  we can prove that
    \begin{equation}\label{ch} \dd_* x = |\Delta(G)|_F^{\frac1{2}}\dd_{G}x.\end{equation}
   It is also easy to see that $ d_*x=\prod_{i=1}^k\dd_i \varphi_i(x) $.
	
For $f(x),g(x)\in F[x]$, assume $f(x)=a\prod_{i=1}^n(x-\alpha_i)$ and $g(x)=b\prod_{j=1}^m(x-\beta_j)$, where $a,b\in F$ and $\alpha_i,\beta_j\in\overline{F}, 1\le i\le n,1\le j\le m$,  the resultant of $f$ and $g$ is defined
  as
	\begin{equation}
	R(f,g):= a^mb^n\prod_{i=1}^n\prod_{j=1}^m(\alpha_i-\beta_j)=a^m\prod_{i=1}^ng(\alpha_i)=(-1)^{mn}b^n \prod_{j=1}^mf(\beta_j).
	\end{equation}
     In particular, for $  g = b \in F^* $, we have $ R ( f , b ) = b^{ {\rm deg } \, f } $.

     We will use the following properties of resultant, which can be proved by definition:
     \begin{equation}
     \begin{split}
     R(f,f')&=(-1)^{\frac12n(n-1)}\Delta(f);\\
     R(f,g_1g_2)&=R(f,g_1)R(f,g_2);\\
     R(f_1f_2,g)&=R(f_1,g)R(f_2,g).
     \end{split}
     \end{equation}

	Then we have the following two propositions:
	
	\begin{prop}\label{prop1}
		For any $G(x)\in F[x]$ such that $G(x) = \prod_{i=1}^k g_i(x)$, with $g_i(x)$ different monic irreducible polynomial over $F$ such that $g_i(x) = \prod_{j=1}^{d_i}(x-\alpha_{ij})$ with $\alpha_{ij}\in\overline{F}$, $d_i = \deg g_i$, $1\le i\le k$ and $n=\sum_{i=1}^k d_i$. Then we have
		\begin{equation}\label{1}
		\int_{f\in M_{n-1}}\chi\chi_0^{-1}(R(G,f))\dd f = \chi_0(\Delta(G))^{-\frac1{2}}\chi(R(G,G'))\frac{\prod_{i=1}^{k}\Gamma_{g_i}(\chi)}{\Gamma(\chi^n)}.
		\end{equation}
		where $\chi_0(x)=|x|_F$, and $\chi$ is a quasi-character of $F^*$ such that $0<n\re\chi<1$.
	\end{prop}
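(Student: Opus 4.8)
The plan is to recognize the integrand as a norm character on the étale algebra $F_G=F[x]/(G)$ and to unfold the integral into a product over the fields $F_{g_i}$, exactly in the spirit of the proof of Lemma~\ref{lemma2}. Write $\theta$ for the image of $x$ in $F_G$, so that $F_G\cong\prod_{i=1}^k F_{g_i}$ has $F$-dimension $n$, and identify $M_{n-1}$ with the affine subspace of $F_G$ (polynomials of degree $\le n-1$) cut out by the condition that the coefficient $\ell(g)$ of $x^{n-1}$ equals $1$. First I would record three algebraic identities. For $f\in M_{n-1}$ the resultant formula $R(G,f)=\prod_{i,j}f(\alpha_{ij})$, together with $\N_G(f)=\prod_i\N_{g_i}(\varphi_i(f))=\prod_{i,j}f(\alpha_{ij})$, gives $R(G,f)=\N_G(f)$, so the integrand is $\chi\chi_0^{-1}(\N_G(f))$. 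Second, I would show that the leading-coefficient functional is represented through the nondegenerate trace form by $1/G'(\theta)$, i.e. $\ell(g)=\Tr_G\!\bigl(g/G'(\theta)\bigr)$; this is the classical Euler dual-basis identity, since $G(x)/(x-\theta)$ is monic of degree $n-1$ in $x$, so the coefficient of $x^{n-1}$ there is $1$ and, divided by $G'(\theta)$, furnishes the trace-dual vector of $\theta^{n-1}$. Third, $\N_G(G'(\theta))=\prod_{i,j}G'(\alpha_{ij})=R(G,G')$.

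With these in hand I would unfold the integral. Multiplying by $\Gamma(\chi^n)=\int_F\psi(t)\chi^n(t)|t|^{-1}\,dt$ and using the scaling bijection $(t,f)\mapsto g=tf$ from $F^*\times M_{n-1}$ onto $\{g\in F_G:\ell(g)\neq0\}$, whose Jacobian gives $d_G g=|t|^{n-1}\,dt\,df$, the powers of $|t|$ cancel after substituting $\N_G(tf)=t^n\N_G(f)$ and $\psi(t)=\psi(t\ell(f))=\psi(\ell(g))$, leaving
\[\Gamma(\chi^n)\int_{M_{n-1}}\chi\chi_0^{-1}(\N_G(f))\,df=\int_{F_G}\psi\bigl(\Tr_G(g/G'(\theta))\bigr)\,\chi\chi_0^{-1}(\N_G(g))\,d_G g,\]
where I have used the second identity to rewrite $\psi(\ell(g))=\psi(\Tr_G(g/G'(\theta)))$. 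Next I would substitute $g=G'(\theta)h$: this turns the character into $\psi(\Tr_G(h))$, multiplies the norm by $R(G,G')$, and scales $d_G g=|R(G,G')|_F\,d_G h$; pulling the factor $\chi\chi_0^{-1}(R(G,G'))\,|R(G,G')|_F=\chi(R(G,G'))$ out front, the remaining integral over $F_G$ factors through $F_G\cong\prod_i F_{g_i}$, and after passing to the self-dual measure by $d_G h=|\Delta(G)|_F^{-1/2}d_*h=\chi_0(\Delta(G))^{-1/2}\,d_*h$ it reduces on each factor to the $F_{g_i}$-version $\Gamma_{g_i}(\chi)=\int_{F_{g_i}}\psi(\Tr_{g_i}x)\,\chi(\N_{g_i}x)\,|x|_{F_{g_i}}^{-1}\,d_ix$. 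This yields $\Gamma(\chi^n)\cdot(\text{LHS})=\chi(R(G,G'))\,\chi_0(\Delta(G))^{-1/2}\prod_i\Gamma_{g_i}(\chi)$, which is precisely (\ref{1}). Equivalently, the same conclusion is reached by changing variables $f\mapsto f/G'(\theta)$ to carry $M_{n-1}$ onto the hyperplane $\{\Tr_G=1\}$ and invoking Lemma~\ref{lemma2} with $c_i=\chi\circ\N_{g_i}$, whose output $\prod_i\Gamma_{g_i}(\chi)/\Gamma(\chi^n)$ matches the right-hand side once the measure factors are accounted for.

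The main obstacle is the second identity $\ell(g)=\Tr_G(g/G'(\theta))$: it is what converts the leading-coefficient constraint defining $M_{n-1}$ into a trace constraint, thereby allowing the unfolded integral to factor across the fields $F_{g_i}$; without it the additive character could not be written as $\psi(\Tr_G(\cdot))$ and the reduction to $\prod_i\Gamma_{g_i}(\chi)$ would not be available. The secondary difficulty is purely bookkeeping, namely tracking the Jacobian $|t|^{n-1}$ of the scaling, the modulus $|R(G,G')|_F$ of multiplication by $G'(\theta)$, and the discriminant factor $|\Delta(G)|_F^{1/2}$ relating $d_G$ to the self-dual measure $d_*$, and verifying that they combine into the single prefactor $\chi_0(\Delta(G))^{-1/2}\chi(R(G,G'))$. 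As in Lemma~\ref{lemma2}, these manipulations involve conditionally convergent oscillatory $\Gamma$-integrals; they are justified on the open region $0<n\,\re\chi<1$, where each defining integral for $\Gamma_{g_i}(\chi)$ converges since $\re\chi>0$, $\Gamma(\chi^n)$ is controlled by $\re(\chi^n)=n\,\re\chi>0$, and the joint bound $\sum_i d_i\,\re\chi=n\,\re\chi<1$ is exactly the convergence hypothesis of Lemma~\ref{lemma2} for the integral over $\{\Tr_G=1\}$.
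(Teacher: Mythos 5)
Your proposal is correct and follows essentially the same route as the paper's own proof: the Euler dual-basis identity $\ell(g)=\Tr_G(g/G'(\theta))$ is exactly the paper's Lagrange-interpolation step, and your unfolding via $\Gamma(\chi^n)$ with the scaling $(t,f)\mapsto tf$ (Jacobian $|t|^{n-1}$), the division by $G'(\theta)$, the measure relation $d_G=\chi_0(\Delta(G))^{-1/2}d_*$, and the factorization into $\prod_i\Gamma_{g_i}(\chi)$ reproduce the paper's computation step for step. Even your closing alternative (changing variables to land on $\{\Tr_G=1\}$ and citing Lemma~\ref{lemma2} directly) is the shortcut the paper explicitly mentions and sets aside as messier.
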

	
	\begin{proof}
		View $M_{n-1}$ as a subset of $F_G$. For any $f\in F_G$, we have
		\begin{equation}
		f(x)=\sum_{i=1}^k\sum_{j=1}^{d_i}f(\alpha_{ij})\frac{\prod_{(k,l)\neq (i,j)}(x-\alpha_{kl})}{\prod_{(k,l)\neq (i,j)}(\alpha_{ij}-\alpha_{kl})}
		\end{equation}
		by Lagrange interpolation formula. Hence
		\begin{equation}
		f \in M_{n-1}
		\Longleftrightarrow
		 \sum_{i=1}^k\sum_{j=1}^{d_i}\frac{f(\alpha_{ij})}{G'(\alpha_{ij})}=\sum_{i=1}^k\Tr_{g_i}\left(\frac{\varphi_i(f)}{\varphi_i(G')}\right)=1.
		\end{equation}
		Let $\phi(g)= \sum_{i=1}^k\Tr_{g_i}\left(\varphi_i(g)/\varphi_i(G')\right)$, then $M_{n-1}=\phi^{-1}(1)$. And we have
		\begin{equation}
	\int_{f\in M_{n-1}}\chi\chi_0^{-1}(R(G,f))\dd f = \int_{f\in M_{n-1}} \prod_{i=1}^k \chi ( \N_{g_i} \varphi_i ( f ) ) | \varphi_i ( f )|_{ F_{g_i}}^{-1}  \dd f.
     \end{equation}
   By Lemma \ref{lemma2} and the remark after its proof, we see the integral converges 	in the region $0< n\re\chi<1$.	
    At this point, we may change the variable $ \varphi_i (f ) /  \varphi_i ( G') \mapsto x_i$ and use Lemma \ref{lemma2} to prove (\ref{1}), but it is messy to compute the change of
     various Haar measures, we choose to proceed directly as follows.
      We have an one-to-one map $\delta:\ F \times M_{n-1}\rightarrow F^n  $, such that for $a\in F$, $f(x)=x^{n-1}+b_{n-2}x^{n-2}+\cdots+b_0\in M_{n-1}$, $\delta(a,f)=h= af(x)=ax^{n-1}+ab_{n-2}x^{n-2}+\cdots+ab_0=(a,ab_{n-2},\cdots,ab_0)$. The Jacobian of $\delta$ is equal to $a^{n-1}$. Note that $F^n\setminus\delta(M_{n-1}\times F)$ has measure zero in $F^n$. So we have \begin{equation}
		\begin{split}
        & \Gamma ( \chi^n ) \int_{f\in M_{n-1}}\chi\chi_0^{-1}(R(G,f))\dd f \\
		=& \int_F\psi(a)\chi(R(G,a))|a|_F^{-1}\dd a \int_{f\in M_{n-1}}\chi\chi_0^{-1}(R(G,f))\dd f \\
		= & \int_{F_G}\psi(a)\chi\chi_0^{-1}(R(G,h))\dd_G h\quad (\text{where $a$ is the highest coefficience of the polynomial $h$.})\\
		= &\chi_0(\Delta(G))^{-\frac1{2}}\int_{F_G}\psi(a)\chi\chi_0^{-1}(R(G,h))\dd_* h\\
		= &\chi_0(\Delta(G))^{-\frac1{2}}\int_{\prod_{i=1}^kF_{g_i}}\psi\left( \sum_{i=1}^k\sum_{j=1}^{d_i}\frac{h(\alpha_{ij})}{G'(\alpha_{ij})}\right) \chi\chi_0^{-1}\left( \prod_{i=1}^k\prod_{j=1}^{d_i}h(\alpha_{ij})\right) \textstyle\prod\limits_{i=1}^k\dd_{i} \varphi_i(h)\\
		= &\chi_0(\Delta(G))^{-\frac1{2}}\chi\chi_0^{-1}\left( \prod_{i=1}^k\prod_{j=1}^{d_i}G'(\alpha_{ij})\right) \int_{\prod\limits_{i=1}^kF_{g_i}}\prod_{i=1}^k\psi\left(\Tr_{g_i}\frac{\varphi_i(h)}{\varphi_i(G')}\right)\prod_{i=1}^k\chi\chi_0^{-1}\left(\N_{g_i}\frac{\varphi_i(h)}{\varphi_i(G')}\right)\textstyle\prod\limits_{i=1}^k\dd_{i} \varphi_i(h)\\
		=&\chi_0(\Delta(G))^{-\frac1{2}}\chi\chi_0^{-1}(R(G,G'))\prod_{i=1}^k\left|\varphi_i(G')\right|_{F_{g_i}}\prod_{i=1}^{k}\Gamma_{g_i}(\chi)\\
		=&\chi_0(\Delta(G))^{-\frac1{2}}\chi\chi_0^{-1}(R(G,G'))\left|\prod_{i=1}^k\prod_{j=1}^{d_i}G'(\alpha_{ij})\right|_F\prod_{i=1}^{k}\Gamma_{g_i}(\chi)\\
		=&\chi_0(\Delta(G))^{\frac1{2}}\chi\chi_0^{-1}(R(G,G'))\prod_{i=1}^{k}\Gamma_{g_i}(\chi)\\
		=&\chi_0(\Delta(G))^{-\frac1{2}}\chi(R(G,G'))\prod_{i=1}^{k}\Gamma_{g_i}(\chi)
		\end{split}
		\end{equation}
where we used the relation $ \dd_G h = \chi_0 ( \Delta ( G) )^{-\frac 12 } \dd_* h $ in the third equation, which is just (\ref{ch}), a corollary of Lemma \ref{lemma2.3};
 and we used Lagrange interpolation in the fourth equation:
 \begin{equation}
 h(x)=\sum_{i=1}^k\sum_{j=1}^{d_i}h(\alpha_{ij})\frac{\prod_{(k,l)\neq (i,j)}(x-\alpha_{kl})}{\prod_{(k,l)\neq (i,j)}(\alpha_{ij}-\alpha_{kl})}
 \Rightarrow
 a=\sum_{i=1}^k\sum_{j=1}^{d_i}\frac{h(\alpha_{ij})}{G'(\alpha_{ij})}
 \end{equation}
		and made change of variables in the sixth equation:
		\begin{equation}
		\varphi_i(h)\mapsto \frac{\varphi_i(h)}{\varphi_i(G')},\quad 1\le i\le k.
		\end{equation}
		\end{proof}

    \begin{prop}\label{prop2}
    	For any $G(x)\in F[x]$ such that $G(x) = \prod_{i=1}^k g_i(x)$, with $g_i(x)$ different monic irreducible polynomial over $F$ such that $g_i(x) = \prod_{j=1}^{d_i}(x-\alpha_{ij})$ with $\alpha_{ij}\in\overline{F}$, $d_i = \deg g_i$, $1\le i\le k$ and $\sum_{i=1}^k d_i=n-1$. Assume $G(0)G(1)\neq 0$, let $S=x(x-1)G$ and let $\alpha,\beta,\gamma$ be quasi-characters of $F^*$ such that $\re \alpha>0,\re \beta >0,\re \gamma>0$ and $\re\alpha+\re\beta+(n-1)\re \gamma<1$, then we have:
    	\begin{equation}\label{2}
    	\begin{split}
    	&\int_{f\in M_{n}}\alpha\chi_0^{-1}(f(0))\beta\chi_0^{-1}(f(1))\gamma\chi_0^{-1}(R(G,f))\dd f \\
    	&= \alpha(-1)\chi_0(\Delta(G))^{-\frac1{2}}\alpha\gamma\chi_0^{-1}(G(0))\beta\gamma\chi_0^{-1}(G(1))\gamma(R(G,G'))\frac{\Gamma(\alpha)\Gamma(\beta)\prod_{i=1}^{k}\Gamma_{g_i}(\gamma)}{\Gamma(\alpha\beta\gamma^{n-1})}.
    	\end{split}    	
    	\end{equation}
    \end{prop}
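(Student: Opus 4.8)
The plan is to view the integrand as a mixed-character instance of Proposition~\ref{prop1} applied to the degree-$(n+1)$ polynomial $S=x(x-1)G$, and then to rerun the argument of that proposition. Writing $S=x\cdot(x-1)\cdot g_1\cdots g_k$ and using multiplicativity of the resultant together with $R(x,f)=f(0)$ and $R(x-1,f)=f(1)$, we get $R(S,f)=f(0)f(1)R(G,f)$; thus the integrand attaches the character $\alpha$ to the factor $x$ (the root $0$), $\beta$ to the factor $x-1$ (the root $1$), and $\gamma$ to each irreducible $g_i$. Under $F_S\cong F\times F\times\prod_i F_{g_i}$ these are quasi-characters whose restrictions to $F^*$ multiply to $\alpha\beta\gamma^{n-1}$, and convergence in the stated region follows from Lemma~\ref{lemma2} and the remark after it exactly as in Proposition~\ref{prop1}.

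For the evaluation I would multiply by $\Gamma(\alpha\beta\gamma^{n-1})$, unfold this factor by \eqref{2.3}, and combine the two integrals through the scaling map $\delta\colon F\times M_n\to F^{n+1}$, $(a,f)\mapsto af$, which has Jacobian $a^n$ and dense image identified with $F_S=F[x]/(S(x))$. Writing $h=af$ and using $R(G,af)=a^{n-1}R(G,f)$, the three character factors pick up $|a|$, $|a|$, $|a|^{n-1}$ respectively, and together with the $|a|^{-1}$ from \eqref{2.3} these cancel the Jacobian, leaving
\begin{equation}
\begin{split}
&\Gamma(\alpha\beta\gamma^{n-1})\int_{f\in M_n}\alpha\chi_0^{-1}(f(0))\beta\chi_0^{-1}(f(1))\gamma\chi_0^{-1}(R(G,f))\dd f\\
&\qquad=\int_{F_S}\psi(a)\,\alpha\chi_0^{-1}(h(0))\,\beta\chi_0^{-1}(h(1))\,\gamma\chi_0^{-1}(R(G,h))\,\dd_S h,
\end{split}
\end{equation}
where $a$ now denotes the leading coefficient of $h$. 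By Lagrange interpolation at the roots of $S$ this coefficient equals $\sum_r h(r)/S'(r)$, so $\psi(a)$ splits as $\psi(h(0)/S'(0))\,\psi(h(1)/S'(1))\prod_i\psi_i(\varphi_i(h)/\varphi_i(S'))$.

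Next I would pass to the self-dual measure via $\dd_S h=\chi_0(\Delta(S))^{-\frac12}\dd_* h$ (equation~\eqref{ch}, a corollary of Lemma~\ref{lemma2.3}), split $\dd_* h$ over $F\times F\times\prod_i F_{g_i}$, and substitute $v_0=h(0)/S'(0)$, $v_1=h(1)/S'(1)$, $w_i=\varphi_i(h)/\varphi_i(S')$. The integral then decouples into $\Gamma(\alpha)$, $\Gamma(\beta)$ and $\prod_i\Gamma_{g_i}(\gamma)$ by \eqref{2.3} and its $F_{g_i}$-version, times a prefactor built from $S'(0)$, $S'(1)$, $\varphi_i(S')$ and $\Delta(S)$. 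Here $S'(0)=-G(0)$, which is the source of the factor $\alpha(-1)$; $S'(1)=G(1)$; and $S'\equiv x(x-1)G'\pmod{g_i}$, whence $\prod_i\N_{g_i}\varphi_i(S')=\prod_{i,j}\alpha_{ij}(\alpha_{ij}-1)G'(\alpha_{ij})=G(0)G(1)R(G,G')$, using $\prod\alpha_{ij}=(-1)^{n-1}G(0)$, $\prod(\alpha_{ij}-1)=(-1)^{n-1}G(1)$ and $\prod G'(\alpha_{ij})=R(G,G')$.

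The one place demanding care, and the step I expect to be the real obstacle, is the accounting of Haar-measure and absolute-value factors. After the substitutions, the Jacobians $|S'(0)|$, $|S'(1)|$, $\prod_i|\N_{g_i}\varphi_i(S')|_F$ cancel exactly against the $\chi_0^{-1}$ parts of the characters evaluated at $S'$, leaving the prefactor $\chi_0(\Delta(S))^{-\frac12}\alpha(-1)(\alpha\gamma)(G(0))(\beta\gamma)(G(1))\gamma(R(G,G'))$. To turn this into the claimed form one uses the discriminant identity $\Delta(S)=\Delta(G)G(0)^2G(1)^2$, which follows from $\Delta(fg)=\Delta(f)\Delta(g)R(f,g)^2$ with $f=x(x-1)$, $g=G$, since $\Delta(x(x-1))=1$ and $R(x(x-1),G)=G(0)G(1)$. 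It splits $\chi_0(\Delta(S))^{-\frac12}=\chi_0(\Delta(G))^{-\frac12}\chi_0^{-1}(G(0))\chi_0^{-1}(G(1))$, supplying precisely the missing $\chi_0^{-1}$ twists on $G(0)$ and $G(1)$ and the factor $\chi_0(\Delta(G))^{-\frac12}$, so that collecting everything yields \eqref{2}. Every other step is a direct transcription of the proof of Proposition~\ref{prop1}.
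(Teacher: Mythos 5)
Your proposal is correct and follows essentially the same route as the paper's proof: unfolding $\Gamma(\alpha\beta\gamma^{n-1})$ via \eqref{2.3}, the scaling map $(a,f)\mapsto af$ onto $F_S$ with Jacobian $a^n$, Lagrange interpolation to split $\psi(a)$, the measure conversion \eqref{ch} from Lemma \ref{lemma2.3}, the splitting $F_S\cong F\times F\times F_G$ with the substitutions by $S'(0)=-G(0)$, $S'(1)=G(1)$, $\varphi_i(S')=\varphi_i(x(x-1)G')$, and the final bookkeeping via $\Delta(S)=G(0)^2G(1)^2\Delta(G)$. The only cosmetic difference is that you justify the discriminant identity explicitly through $\Delta(fg)=\Delta(f)\Delta(g)R(f,g)^2$, which the paper uses without comment.
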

    \begin{proof}
        The proof is similar.
    	View $M_n$ as a subset of $F_S$. For any $f\in F_S$, we have
    	\begin{equation}
    	f(x)=\sum_{i=1}^k\sum_{j=1}^{d_i}f(\alpha_{ij})\frac{x(x-1)\prod_{(k,l)\neq (i,j)}(x-\alpha_{kl})}{\alpha_{ij}(\alpha_{ij}-1)\prod_{(k,l)\neq (i,j)}(\alpha_{ij}-\alpha_{kl})}+f(0)\frac{(x-1)G(x)}{S'(0)}+f(1)\frac{xG(x)}{S'(1)}.
    	\end{equation}
    	by Lagrange interpolation formula. Hence
    	\begin{equation}
    	f \in M_{n}
    	\Longleftrightarrow
    	\sum_{i=1}^k\sum_{j=1}^{d_i}\frac{f(\alpha_{ij})}{S'(\alpha_{ij})}+\frac{f(0)}{S'(0)}+\frac{f(1)}{S'(1)}= \sum_{i=1}^k\Tr_{g_i}\left(\frac{\varphi_i(f)}{\varphi_i(S')}\right)+\frac{f(0)}{S'(0)}+\frac{f(1)}{S'(1)}=1.
    	\end{equation}
    	Let $\phi(f)= \sum_{i=1}^k\Tr_{g_i}\left(\varphi_i(f)/\varphi_i(S')\right)+f(0)/S'(0)+f(1)/S'(1)$, then $M_n=\phi^{-1}(1)$. And we have
    	\begin{equation}
    	\begin{split}
    	&\int_{f\in M_{n}}\alpha\chi_0^{-1}(f(0))\beta\chi_0^{-1}(f(1))\gamma\chi_0^{-1}(R(G,f))\dd f \\
    	=& \int_{f\in M_{n}} \alpha(f(0))|f(0)|_F^{-1}\beta(f(1))|f(1)|_{F}^{-1}\prod_{i=1}^k \chi ( \N_{g_i} \varphi_i ( f ) ) | \varphi_i ( f )|_{ F_{g_i}}^{-1}  \dd f
    	\end{split}
    	\end{equation}
    	By Lemma \ref{lemma2} and the remark after its proof, we see the integral converges in the region: $\re \alpha,\re \beta,\re \gamma>0;\, \re\alpha+\re\beta+(n-1)\re \gamma<1$. Similar to Proposition \ref{prop1}, we compute the integral as follows:
    	\begin{equation}
    	\begin{split}
    	&\Gamma(\alpha\beta\gamma^{n-1})\int_{f\in M_{n}}\alpha\chi_0^{-1}(f(0))\beta\chi_0^{-1}(f(1))\gamma\chi_0^{-1}(R(G,f))\dd f\\
    	=&\int_F\psi (a ) \alpha\beta(a)\gamma(R(G,a))|a|^{-1}_F\dd a\int_{f\in M_{n}}\alpha\chi_0^{-1}(f(0))\beta\chi_0^{-1}(f(1))\gamma\chi_0^{-1}(R(G,f))\dd f\\
    	=&\int_{M_n\times F}\psi(a)\alpha\chi_0^{-1}(af(0))\beta\chi_0^{-1}(af(1))\gamma\chi_0^{-1}(R(G,af))|a|_F^{n}\dd f\dd a\\
    	=&\int_{F_S}\psi(a)\alpha\gamma^{-1}(h(0))\beta\gamma^{-1}(h(1))\gamma\chi_0^{-1}(R(S,h))\dd_S h\ (\text{where $a$ is the highest coefficience of $h$})\\
    	=&\chi_0(\Delta(S))^{-\frac1{2}}\int_{F_S}\psi(a)\alpha\gamma^{-1}(h(0))\beta\gamma^{-1}(h(1))\gamma\chi_0^{-1}(R(S,h))\dd_* h \ (\text{Lemma }\ref{lemma2.3})\\
    	=&\chi_0(\Delta(S))^{-\frac1{2}}\int_{F_S}\psi\left(\sum_{i=1}^k\sum_{j=1}^{d_i}\frac{h(\alpha_{ij})}{\alpha_{ij}(\alpha_{ij}-1)G'(\alpha_{ij})}+\frac{h(0)}{-G(0)}+\frac{h(1)}{G(1)}\right)\\
    	&\times\alpha\gamma^{-1}(h(0))\beta\gamma^{-1}(h(1))\gamma\chi_0^{-1}\left(h(0)h(1)\prod_{i=1}^k\prod_{j=1}^{d_i}h(\alpha_{ij})\right)\dd_* h.
    	\end{split}
    	\end{equation}
    	Here we are using the Lagrange interpolation in the last equality to get the expression of $a$ in terms of $h$. Now via the isomorphism:
    	\[
    	F_S\cong F\times F\times F_G,\ h\mapsto (h(0),h(1),h\text{ mod }{G}), \dd_* h\mapsto\dd_* h(0)\cdot\dd_* h(1)\cdot \dd_{*G} h.
    	\]
    	where $d_{*G}h$ is the haar measure on $F_G$ such that $\mathcal{F}_G$ is an isometry, we have
    	\begin{equation}
    	\begin{split}
    	&\int_{F_S}\psi(a)\alpha\gamma^{-1}(h(0))\beta\gamma^{-1}(h(1))\gamma\chi_0^{-1}(R(S,h))\dd_* h\\
    	=&\int_{F_G}\psi\left(\sum_{i=1}^k\sum_{j=1}^{d_i}\frac{h(\alpha_{ij})}{\alpha_{ij}(\alpha_{ij}-1)G'(\alpha_{ij})}\right)\gamma\chi_0^{-1}\left(\prod_{i=1}^k\prod_{j=1}^{d_i}h(\alpha_{ij})\right)\dd_{*G} h\\
    	&\times \int_F \psi\left( \frac{h(0)}{-G(0)}\right) \alpha\chi_0^{-1}(h(0))\dd_* h(0)\int_F \psi\left( \frac{h(1)}{G(1)}\right) \beta\chi_0^{-1}(h(1))\dd_* h(1)\\
    	=&\int_{\prod_{i=1}^kF_{g_i}}\prod_{i=1}^k\psi\left(\Tr_{g_i}\frac{\varphi_i(h)}{\varphi_i(x(x-1)G')}\right)\prod_{i=1}^k\gamma\chi_0^{-1}\left(\N_{g_i}(\varphi_i(h)\right)\textstyle\prod\limits_{i=1}^k\dd_{i} \varphi_i(h)\\
    	&\times \int_F \psi\left( \frac{h(0)}{-G(0)}\right) \alpha\chi_0^{-1}(h(0))\dd_* h(0)\int_F \psi\left( \frac{h(1)}{G(1)}\right) \beta\chi_0^{-1}(h(1))\dd_* h(1)
    	\end{split}
    	\end{equation}
    	where $\varphi_i(x(x-1)G')$ should be interpreted as $\varphi_i\left(x(x-1)G'\text{ mod }G\right)$. Now make several change of variables:
    	\begin{equation}
    	\begin{split}
    	&\varphi_i(h)\mapsto \varphi_i(h)/\varphi_i(x(x-1)G'), 1\le i\le k;\\
    	&h(0)\mapsto -h(0)/G(0); \\
    	&h(1)\mapsto h(1)/G(1).
    	\end{split}
    	\end{equation}
    	We get:
    	\begin{equation}
    	\begin{split}    	
    	&\Gamma(\alpha\beta\gamma^{n-1})\int_{f\in M_{n}}\alpha\chi_0^{-1}(f(0))\beta\chi_0^{-1}(f(1))\gamma\chi_0^{-1}(R(G,f))\dd f\\
    	=&\chi_0(\Delta(S))^{-\frac{1}{2}}\prod_{i=1}^k\gamma\chi_0^{-1}(\N_{g_i}(\varphi_i(x(x-1)G')))|\varphi_i(x(x-1)G')|_{F_{g_i}}\prod_{i=1}^{k}\Gamma_{g_i}(\gamma)\\
    	&\times\alpha\chi_0^{-1}(-G(0))|G(0)|_F\beta\chi_0^{-1}(G(1))|G(1)|_F\Gamma(\alpha)\Gamma(\beta)\\ 	
    	 =&\chi_0(\Delta(S))^{-\frac{1}{2}}\gamma\chi_0^{-1}\left(\prod_{i=1}^k\prod_{j=1}^{d_i}\alpha_{ij}(\alpha_{ij}-1)G'(\alpha_{ij})\right)\left|\prod_{i=1}^k\prod_{j=1}^{d_i}\alpha_{ij}(\alpha_{ij}-1)G'(\alpha_{ij})\right|_F\prod_{i=1}^{k}\Gamma_{g_i}(\gamma)\\
    	&\times\alpha(-G(0))\beta(G(1))\Gamma(\alpha)\Gamma(\beta)\\
    	=&\chi_0\left(G(0)^2G(1)^2\Delta(G)\right)^{-\frac{1}{2}}\gamma\chi_0^{-1}\left(G(0)G(1)R(G,G')\right)\left|G(0)G(1)R(G,G')\right|_F\prod_{i=1}^{k}\Gamma_{g_i}(\gamma)\\
    	&\times\alpha(-G(0))\beta(G(1))\Gamma(\alpha)\Gamma(\beta)\\
    	=&\alpha(-1)\chi_0(\Delta(G))^{-\frac1{2}}\alpha\gamma\chi_0^{-1}(G(0))\beta\gamma\chi_0^{-1}(G(1))\gamma(R(G,G'))\Gamma(\alpha)\Gamma(\beta)\prod_{i=1}^{k}\Gamma_{g_i}(\gamma).
    	\end{split}
    	\end{equation}
        which completes the proof of the proposition.
    \end{proof}

\section{Proof of Theorem 1.1}\label{section3}

    \begin{proof}
    	The proof is by induction. When $n=1$, the formula (\ref{formula}) is just (\ref{2.4}).
    	Now assume $S_{n-1}(\alpha,\beta,\gamma)$ converges for any $\alpha,\beta,\gamma$ in the region $R_{n-1}$ and we have
        \begin{equation}
        S_{n-1}(\alpha,\beta,\gamma)=\prod_{j=0}^{n-2}\frac{\Gamma(\alpha\gamma^{j})\Gamma(\beta\gamma^{j})\Gamma(\gamma^{j+1})}{\Gamma(\alpha\beta\gamma^{n+j-1})\Gamma(\gamma)}.
        \end{equation}
        Consider the double integral:
        \begin{equation}
        T_n:=\int_{P\in M_{n-1}}\int_{Q\in M_{n}}\alpha\chi_0^{-1}(Q(0))\beta\chi_0^{-1}(Q(1))\gamma\chi_0^{-1}(R(P,Q))\dd Q\dd P.
        \end{equation}
        By Propostion \ref{prop2}, we get
        \begin{equation}\label{T1}
        \begin{split}
        &T_{n,P}\\
        :=&\int_{P\in M_{n-1}}\left(\int_{Q\in M_{n}}\alpha\chi_0^{-1}(Q(0))\beta\chi_0^{-1}(Q(1))\gamma\chi_0^{-1}(R(P,Q))\dd Q\right)\dd P\\
        =&\int_{ P\in M_{n-1}}\alpha(-1)\chi_0^{-\frac1{2}}(\Delta(P))\alpha\gamma\chi_0^{-1}(P(0))\beta\gamma\chi_0^{-1}(P(1))\gamma(R(P,P'))\frac{\Gamma(\alpha)\Gamma(\beta)\prod\limits_{i=1}^{k}\Gamma_{g_i}(\gamma)}{\Gamma(\alpha\beta\gamma^{n-1})}\dd P\\
        =&\alpha^n(-1)S_{n-1}(\alpha\gamma,\beta\gamma,\gamma)\frac{\Gamma(\alpha)\Gamma(\beta)\Gamma(\gamma)^{n-1}}{\Gamma(\alpha\beta\gamma^{n-1})}
        \end{split}
        \end{equation}
        valid in the region
    	\begin{equation}
    	\begin{split}
    	&\re \alpha,\re\beta,\re\gamma>0;\\
    	&\re \alpha+\re\beta+(n-1)\re\gamma<1;\\
    	&\re \alpha+1+\re\beta+1+2(n-2)\re\gamma<1.
    	\end{split}
    	\end{equation}
    	which is exactly the region $R_n$. Note that the absolute value of the intergrand is the same as replacing $\alpha,\beta,\gamma$ with $\chi_0^{\re \alpha},\chi_0^{\re \beta},\chi_0^{\re \gamma}$. Hence the we have $T_n=T_{n,P}=T_{n,Q}$ in $R_n$ by Fubini-Tonelli theorem, where $T_{n,Q}$ is defined by the following
    	\begin{equation}\label{T2}
    	\begin{split}
    	&T_{n,Q}\\
    	:=&\int_{Q\in M_{n}}\left(\int_{P\in M_{n-1}}\alpha\chi_0^{-1}(Q(0))\beta\chi_0^{-1}(Q(1))\gamma\chi_0^{-1}(R(P,Q))\dd P\right)\dd Q.\\
    	=&\int_{Q\in M_n}\alpha\chi_0^{-1}(Q(0))\beta\chi_0^{-1}(Q(1))\chi_0(\Delta(Q))^{-\frac1{2}}\gamma(R(Q,Q'))\frac{\prod_{i=1}^{k}\Gamma_{g_i}(\gamma)}{\Gamma(\gamma^n)}\dd Q\\
    	=&\alpha^n(-1)S_n(\alpha,\beta,\gamma)\frac{\Gamma(\gamma)^n}{\Gamma(\gamma^{n})}
    	\end{split}
    	\end{equation}
    	here we are using Proposition \ref{prop1} in the second equation. Thus $S_n(\alpha,\beta,\gamma)$ converges in the region $R_n$, and $T_{n,P}=T_{n,Q}$ gives us
    	\begin{equation}\label{ind equ}
    	S_n(\alpha,\beta,\gamma)
    	 =S_{n-1}(\alpha\gamma,\beta\gamma,\gamma)\frac{\Gamma(\alpha)\Gamma(\beta)\Gamma(\gamma^n)}{\Gamma(\alpha\beta\gamma^{n-1})\Gamma(\gamma)}
    	 =\prod_{j=0}^{n-1}\frac{\Gamma(\alpha\gamma^{j})\Gamma(\beta\gamma^{j})\Gamma(\gamma^{j+1})}{\Gamma(\alpha\beta\gamma^{n+j-1})\Gamma(\gamma)}.
    	\end{equation}
    	which completes the proof of the main theorem.
    \end{proof}

    As a special case, we consider when $F=\C$. We take $\psi ( z ) =  \psi ( x + i y ) = e^{ 4\pi i x}$ as in \cite{T}.
     The self-dual Haar measure $ d z $ is then twice the usual Lebesgue measure on ${\mathbb{C}}$. Note that $ | z |_{\mathbb{C}} = | z |^2 $.
    The map
    \begin{equation}
    \Phi_n: \C^n\rightarrow M_n,\ (z_1,\cdots,z_n)\mapsto f(z)=\prod_{i=1}^n(z-z_i) = z^n + b_{n-1} z^{n-1} + \dots + b_0 
    \end{equation}
    is surjective and generically $n!$ to $1$. 
     It is known that the Jacobian of this map is  $\text{Jac}\,\Phi_n(z_1,\cdots,z_n) = \prod_{i<j}(z_i-z_j)$ \cite{Ser}.
        Thus
    \begin{equation}
    |\text{Jac}\,\Phi_n(z_1,\cdots,z_n)|_{\mathbb{C}}=|\prod_{i<j}(z_i-z_j)|_{\mathbb{C}}=|\Delta(f)|_{\mathbb{C}}^{\frac12}.
    \end{equation} So we have
    \begin{equation}
    \begin{split}
    S_n(\alpha,\beta,\gamma)
    =\ &\alpha^n\gamma^{\frac{n(n-1)}{2}}(-1) \int_{f\in M_{n}}\alpha\chi_0^{-1}(f(0))\beta\chi_0^{-1}(f(1))\gamma\chi_0^{-\frac1{2}}(\Delta(f))\dd f\\
    =\ &\frac 1 { n!} \int_{\C^n}\alpha\chi_0^{-1}(\prod_{i=1}^nz_i)\beta\chi_0^{-1}(\prod_{i=1}^n(1-z_i))\gamma(\prod_{i\neq j}(z_i-z_j))\textstyle\prod\limits_{i=0}^n\dd z_i.
    \end{split}
    \end{equation}
    Then by Thm \ref{thm}, we get:
    \begin{equation}\label{gencomplex} \int_{\C^n}\prod_{i=1}^n\alpha\chi_0^{-1}(z_i)\beta\chi_0^{-1}(1-z_i)\prod_{ i \neq j}\gamma((z_i-z_j))\textstyle\prod\limits_{i=0}^n\dd z_i\displaystyle
    =  n!\prod_{j=0}^{n-1}\frac{\Gamma_{\C}(\alpha\gamma^{j})\Gamma_{\C}(\beta\gamma^{j})\Gamma_{\C}(\gamma^{j+1})}{\Gamma_{\C}(\alpha\beta\gamma^{n+j-1})\Gamma_{\C}(\gamma)}.
    \end{equation}
    valid in the region $R_n$.
    If $\alpha,\beta,\gamma$ are all unramified, i.e. $\alpha=|\cdot|_{\mathbb{C}}^{a},\beta = |\cdot|_{\mathbb{C}}^{b},\gamma = |\cdot|_{\mathbb{C}}^{c}$, for some $a,b,c\in \C$, then formula (\ref{gencomplex}) becomes:
    \begin{equation}\label{complex}
    \begin{split}
    &\int_{\C^n}\prod_{i=1}^n|z_i|^{2a-2}|1-z_i|^{2b-2}\prod_{ 1\leq i < j\leq n }|z_i-z_j|^{4 c}\textstyle\prod\limits_{i=0}^n\dd z_i\displaystyle\\
    =\,& n!\prod_{j=0}^{n-1}\frac{\Gamma_{\C}(a+jc)\Gamma_{\C}(b+jc)\Gamma_{\C}((j+1)c)}{\Gamma_{\C}(a+b+(n+j-1)c)\Gamma_{\C}(c)}\\
    =\,&\frac{\prod\limits_{j=0}^{n-1}2\sin(\pi(a+jc))\sin(\pi(b+jc))\sin(\pi(j+1)c)}{n
    	!\prod\limits_{j=0}^{n-1}\sin(\pi(a+b+(n+j-1)c))\sin(\pi c)}S_n(a,b,c)^2
    \end{split}
    \end{equation}
    where for any $s\in \C$ with $0<\re s<1$,
    \begin{equation}
    \Gamma_\C(s):=\Gamma_\C(|\cdot|_{\mathbb{C}}^s)
    = \frac{(2\pi)^{1-s}\Gamma(s)}{(2\pi)^{s}\Gamma(1-s)}
    = 2^{1-2s}\pi^{-2s}\Gamma(s)^2\sin(\pi s).
    \end{equation}
    (cf. \cite{T}) and $S_n(a,b,c)$ is defined in (\ref{selberg}). The formula (\ref{complex}) is the same as the formula obtained by Aomoto \cite{Ao}.

\

\end{document}